\theoremstyle{plain}
\newtheorem{thm}{Theorem}[section]
\newtheorem*{thm*}{Theorem}
\newaliascnt{prop}{thm}
\newaliascnt{cor}{thm}
\newaliascnt{lem}{thm}
\newaliascnt{claim}{thm}
\newaliascnt{defn}{thm}
\newaliascnt{ques}{thm}
\newaliascnt{conj}{thm}
\newaliascnt{fact}{thm}
\newaliascnt{rem}{thm}
\newaliascnt{ex}{thm}
\newtheorem{prop}[prop]{Proposition}
\newtheorem{lem}[lem]{Lemma}
\newtheorem*{prop*}{Proposition}
\newtheorem*{cor*}{Corollary}
\newtheorem*{lem*}{Lemma}
\newtheorem*{claim*}{Claim}
\theoremstyle{definition}
\newtheorem*{defn*}{Definition}
\newtheorem*{ques*}{Question}
\newtheorem*{conj*}{Conjecture}
\newtheorem*{prob*}{Problem}
\newtheorem{rem}[rem]{Remark}
\newtheorem{ex}[ex]{Example}
\newtheorem*{fact*}{Fact}
\newtheorem*{rem*}{Remark}
\newtheorem*{ex*}{Example}
\def\textsectionN~{\textsection{}}
\renewcommand{\O}{\@undefined}
\renewcommand{\P}{\@undefined}
\renewcommand\phi{\varphi}
\renewcommand\epsilon{\varepsilon}
\renewcommand\leq{\leqslant}
\newcommand{\set}{%
  \@ifstar{\@setstar}{\@set}%
}%
\newcommand{\@setstar}[2]{\{\, #1 \mid #2 \,\}}
\newcommand{\@set}[1]{\{#1\}}
\newcommand{\lin}[1]{\langle\, #1 \,\rangle}
\newcommand{\trans}[1][1]{\raisebox{#1ex}{\scriptsize\kern0.1em$t$\kern-0.1em}}
\newcommand{\PP}{\mathbb{P}}
\newcommand{\TT}{\mathbb{T}}
\newcommand{\PN}{\PP^N}
\newcommand{\Pv}[1][N]{(\PP^{#1})\spcheck}
\newcommand{\sO}{\mathscr{O}}
\DeclareMathOperator{\rk}{rk}%
\DeclareMathOperator{\Hom}{Hom}%
\DeclareMathOperator{\im}{im}%
\newcommand{\Gr}{\mathbb{G}}
\newcommand\spcirc{^\circ}
\newcommand{\tdiff}[2]{{\partial #1}/{\partial #2}}
\newcommand{\diff}{\tdiff}
\newcommand{\sQ}{\mathscr{Q}}
\newcommand{\sS}{\mathscr{S}}
\newcommand{\sHom}{\mathscr{H}om}
\newcommand\Xo{X\spcirc}
\newcommand\Xdo{{X'}\spcirc}
\newcommand\xo{x_o}
\newcommand{\yo}{y_o}
\newcommand{\Gn}{\Gr(n,\PN)}
\newcommand\LL{\mathbb{L}}
\newcommand\LLx[1][x]{\LL_{#1}(\ker d_{#1}\gamma)}
\title[On general fibers of Gauss maps]{On general fibers of Gauss maps \\ in positive characteristic}%
\email{katu@tims.ntu.edu.tw}
\author[K.~Furukawa]{Katsuhisa~FURUKAWA}
\address{
  Department of Mathematics, Center for Advanced Studies in %
  Theoretical Sciences, National Taiwan University, %
  % No.~1, Sec.~4, Roosevelt Road, 
  Taipei, %106, 
  Taiwan}
\subjclass[2010]{Primary 14N05; Secondary 14M15}
\keywords{Gauss map, degeneracy map, strange variety}
\date{}
\begin{document}

\maketitle

\begin{abstract}
  A general fiber of the Gauss map of a projective variety in $\PN$
  coincides with a linear subvariety of $\PN$ in characteristic zero.
  In positive characteristic, S. Fukasawa showed that
  a general fiber of the Gauss map can be a non-linear variety.
  In this paper, we show that each irreducible component of such a possibly non-linear fiber
  of the Gauss map
  is contracted to one point by the degeneracy map,
  and is contained in a linear subvariety corresponding
  to the kernel of the differential of the Gauss map.
  We also show the inseparability of Gauss maps of strange varieties not being cones.
\end{abstract}

\vspace{1em}

\section{Introduction}
\label{sec:introduction}

Let $X \subset \PN$ be an $n$-dimensional projective variety over an algebraically closed field of arbitrary characteristic,
and let $\gamma: X \dashrightarrow \Gn$ be the \emph{Gauss map} of $X$,
which sends a smooth point $x \in X$ to the embedded tangent space $\TT_xX$ to $X$ at $x$ in $\PN$.
We denote by
$d_x \gamma: T_xX \rightarrow T_{\gamma(x)} \Gn$
the differential of $\gamma$ at $x \in X$,
a linear map between Zariski tangent spaces at $x$ and $\gamma(x)$.
We denote by $\rk\gamma$
the rank of $d_x \gamma$ at general $x \in X$.
Note that $\gamma$ is separable if and only if $\rk\gamma = \dim(\im(\gamma))$.

The \emph{degeneracy map} $\kappa$ of $X$ is defined to be
the rational map
\begin{equation*}  \kappa: X \dashrightarrow \Gr({n - \rk\gamma}, \PN)
\end{equation*}
which sends a general point $x \in X$ to the $({n - \rk\gamma})$-plane $\LLx \subset \PN$,
where we denote by $\LL_x(A) \subset \TT_xX$
the $m$-plane corresponding to an $m$-dimensional vector subspace $A \subset T_xX$.

\vspace{1em}
In characteristic zero,
it is well known that the closure of a general fiber of $\gamma$
is equal to a linear subvariety of $\PN$
(P.~Griffiths and J.~Harris \cite[(2.10)]{GH}, F.~L.~Zak \cite[I, 2.3.~Theorem~(c)]{Zak};
S.~L.~Kleiman and R.~Piene gave another proof in terms of reflexivity \cite[pp.~108--109]{KP}),
where the fiber of $\gamma$
is indeed equal to $\kappa(x) = \LLx \subset \PN$ for general $x$ in the fiber.
The same statement holds
in positive characteristic
if $\gamma$ is separable
\cite[Theorem~1.1]{expshr}.

A.~H.~Wallace \cite[\textsection 7]{Wallace} pointed out that
the Gauss map $\gamma$ can be \emph{inseparable} in positive characteristic.
In this case,
a general fiber of $\gamma$ % with $y \in \im(\gamma)$
is not equal to $\kappa(x)$,
since their dimensions are different.
Moreover, it is possible that
a general fiber of $\gamma$ is \emph{not} equal to a linear subvariety of $\PN$;
the fiber can be a union of points
(Wallace \cite[\textsection 7]{Wallace}, Kleiman and A.~Thorup \cite[I-3]{Kleiman86},
H.~Kaji \cite[Example~4.1]{Kaji1986} \cite{Kaji1989}, J.~Rathmann \cite[Example~2.13]{Rathmann}, A.~Noma \cite{Noma2001}),
and can be a union of non-linear varieties
(S.~Fukasawa \cite[\textsection{}7]{Fukasawa2005} \cite{Fukasawa2006}, the author and A.~Ito \cite[\textsection{}5]{FI} \cite[Theorem~1.3]{FI2}).

In this paper, we investigate the relationship between
the $({n - \rk\gamma})$-plane $\kappa(x) = \LLx$
and the general fiber of $\gamma$ (possibly non-linear, as above).

\begin{thm}\label{thm:mainthm-1}
  Let $X \subset \PN$ be an $n$-dimensional projective variety,
  and let $F \subset X$ be an irreducible component of the closure of
  a general fiber of the Gauss map $\gamma$.
  Then $\LLx \subset \PN$
  is constant on general $x \in F$
  (in other words, $F$ is contracted to one point by $\kappa$).
  Therefore $F$ is contained in this constant $({n - \rk\gamma})$-plane.
\end{thm}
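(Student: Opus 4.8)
The plan is to prove the single assertion that the degeneracy map $\kappa$ is constant on $F$; the last sentence of the theorem then follows at once, since for general $x\in F$ the plane $\kappa(x)=\LLx$ contains the point $x$, so if $\kappa(x)$ equals one fixed $(n-\rk\gamma)$-plane $L$ for all general $x\in F$, the dense set of such $x$ lies in the closed set $L$ and hence $F\subseteq L$. We may assume $\dim F\geq 1$: otherwise $F$ is a single point, which lies in $\LLx$, and there is nothing to prove. Writing $Z:=\overline{\im\gamma}$, recall that $\dim F=n-\dim Z$ and $\dim Z\geq\rk\gamma$, with equality exactly when $\gamma$ is separable; thus in general $\dim F<n-\rk\gamma=\dim\ker d_x\gamma$, and this gap is the source of all the difficulty below.

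First I would set up the universal family of embedded tangent spaces. Let $\mathcal G\subseteq X\times\Gn$ be the closure of the graph of $\gamma$, let $\mathcal U\hookrightarrow\Gn\times\PN$ be the tautological $\PP^{n}$-bundle over $\Gn$, and put $\mathcal U_\gamma:=\mathcal G\times_{\Gn}\mathcal U$ with its projections $\sigma\colon\mathcal U_\gamma\to X$ and $\tau\colon\mathcal U_\gamma\to\PN$, so that $\tau(\sigma^{-1}(x))=\TT_xX$ for general $x$. Since $\gamma$ is constant on $F$, say with value $T:=\TT_xX$ for every general $x\in F$, the restriction of $\mathcal U_\gamma$ over $F$ is $F\times T$; in particular $F\subseteq T$, and the normal space $N:=T_x\PN/\widehat{\TT_xX}$ is one fixed vector space for all general $x\in F$.

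The key reduction is to rewrite $\LLx$ in terms that involve only $X$ infinitesimally near $F$. A standard computation of the differential of the Gauss map at a general point $x$ identifies $\ker d_x\gamma$ with the kernel of the second fundamental form $\mathrm{II}_x\colon T_xX\times T_xX\to N$, and shows that, for any tangent direction $w$ at $x$, the first--order intersection of $\TT_xX$ with the embedded tangent space at the first--order deformation of $x$ along $w$ equals $\LL_x\bigl(\{\,v\in T_xX:\mathrm{II}_x(w,v)=0\,\}\bigr)$; intersecting over all $w$ and using the symmetry of $\mathrm{II}_x$ gives
\[
  \LLx\;=\;\bigcap_{w}\bigl(\TT_xX\cap\TT_{x+\epsilon w}X\bigr),
\]
the intersection being taken in the first--order (formal) sense. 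Along $F$ all the data $T$, $N$ and $\mathrm{II}_x$ live on fixed ambient spaces, so $x\mapsto\LLx$ is a morphism $F\dashrightarrow\Gr(n-\rk\gamma,T)$, and it remains to prove that this morphism is constant.

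To finish, I would restrict to a general curve $C\subseteq F$ through $x$ with velocity $v$ and show that the derivative along $C$ of the plane $\LLx[C(t)]$ vanishes. This derivative is controlled by $\partial_v\mathrm{II}$, i.e.\ by the third fundamental form $\mathrm{III}_x(v,-,-)$; the point is that, because $\gamma$ is genuinely constant on all of $F$ and not merely to first order, the Gauss map has vanishing higher derivatives along $F$, and this should force the relevant third--order terms to die against the directions tangent to $F$. I expect to extract this vanishing either by a direct jet computation along $F$ inside $T$, or, more structurally, by separating out the inseparable part of $\gamma$, identifying $F$ with a fiber of the resulting generically separable map, and transporting the known linear--fiber statement for separable Gauss maps \cite[Theorem~1.1]{expshr} back to $X$. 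The main obstacle is exactly this last step: in positive characteristic $\ker d_x\gamma=\ker\mathrm{II}_x$ is strictly larger than $T_xF$ and one does \emph{not} have $\ker\mathrm{II}_x\subseteq\ker\mathrm{III}_x$, so the constancy of $\LLx$ along $F$ cannot be deduced from fundamental--form bookkeeping alone and genuinely requires using that $F$ is an honest, not merely first--order, fiber of $\gamma$.
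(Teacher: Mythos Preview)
Your proposal is not a proof: you correctly locate the heart of the matter --- that constancy of $\gamma$ on $F$ does not formally imply constancy of $\ker d_x\gamma$, because in positive characteristic $T_xF\subsetneq\ker d_x\gamma$ --- but you do not close this gap. The sentences ``this should force the relevant third-order terms to die'' and ``I expect to extract this vanishing either by a direct jet computation\ldots or\ldots by separating out the inseparable part'' are hopes, not arguments. Neither route is carried out, and the first is blocked for exactly the reason you name in your final paragraph: knowing that all derivatives of $\gamma$ vanish along $T_xF$ gives no control over how $\ker\mathrm{II}_x$ moves in the remaining directions of $\ker d_x\gamma$. The second route is also not straightforward: factoring out a power of Frobenius produces a separable map to $\Gn$, but that map is not the Gauss map of any variety, so the linear-fiber theorem for separable Gauss maps does not apply to it.

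The paper bypasses the difficulty by a change of viewpoint you do not attempt. Instead of tracking the \emph{kernel} of $d_x\gamma$, which lives in the varying space $T_xX$, it tracks the \emph{image} $d_x\gamma(T_xX)\subset T_{\yo}\Gn$, which lives in one fixed space. First (Proposition~\ref{thm:const-dgammaTxX-xinF}), this image is shown to be constant on $F$: one cuts $X$ by a general linear space through a general point to produce an $r$-dimensional $X''$ on which $\gamma$ is separable and generically finite onto $Y':=\gamma(X'')$; the component $X'$ of $\overline{\gamma^{-1}Y'}$ containing $X''$ still has $\gamma|_{X'}$ separable, contains $F$, and for general $x\in F$ one gets $d_x\gamma(T_xX)=d_x\gamma(T_xX')=T_{\yo}Y'=:L$. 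Second (Proposition~\ref{thm:kappa-const-xinF}), $\kappa(x)=\sigma_\gamma(x)$ is the projectivization of $\ker\Phi_x$, and the linear map $\Phi_x\colon Q\spcheck\to\Hom(T_xX,S\spcheck)$ factors as $Q\spcheck\xrightarrow{\phi}\Hom(L,S\spcheck)\hookrightarrow\Hom(T_xX,S\spcheck)$ with injective second arrow; hence $\ker\Phi_x=\ker\phi$ depends only on $(\yo,L)$ and not on $x$. No third fundamental form is needed. The idea you are missing is precisely this factorization: once the image $L$ is constant, the degeneracy plane --- read inside the fixed space $Q\spcheck$ --- is automatically constant.
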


The above constant $({n - \rk\gamma})$-plane in $\PN$
corresponds to $\kappa(F)$.
We note that $\kappa(F_1) \neq \kappa(F_2)$ can occur
for two irreducible components $F_1$ and $F_2$ of a general fiber of $\gamma$
(see \autoref{thm:ex-fib-diff-pl}).

\vspace{1em}

Next we examine
Gauss maps of strange varieties.
A projective variety $X \subset \PN$ is said to be \emph{strange} for a point $v \in \PN$ if $v \in \TT_xX$ holds for any smooth point $x \in X$.
In previous studies of Gauss maps in positive characteristic,
the inseparability of $\gamma$ of strange $X$
was often observed with attractive phenomena (e.g., \cite{Fukasawa2006}, \cite{fukasawa-kaji3}).
Motivated by such observations, we show:

\begin{thm}\label{thm:mainthm-2}\label{thm:mainthm-strange}
  Let $X \subset \PN$ be a projective variety
  which is strange for a point $v \in \PN$.
  Then $v$ is contained in $\LLx$.
  Moreover, if $X$ is {not} a cone with vertex $v$,
  then $\gamma$ is inseparable.
\end{thm}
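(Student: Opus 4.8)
To prove \autoref{thm:mainthm-strange} I would establish the two assertions separately: the first by a computation with the ``strangeness relation'' at the generic point of $X$, the second by feeding that into \autoref{thm:mainthm-1}.

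For the first assertion the plan is to work infinitesimally. Let $K$ be the function field of $X$, fix a separating transcendence basis $u_1,\dots,u_n$ of $K/k$ (available since $k$ is perfect) with associated derivations $\partial/\partial u_1,\dots,\partial/\partial u_n$, and let $\tilde x\in K^{N+1}\setminus 0$ represent the generic point of $X$. Then $W:=\lin{\tilde x,\tilde x_{u_1},\dots,\tilde x_{u_n}}\subset K^{N+1}$ is the cone over the generic tangent space $\TT_xX$, and, under $T_W\Gn\cong\Hom(W,K^{N+1}/W)$, the differential $d_x\gamma$ is computed by $\tilde x\mapsto 0$ and $\tilde x_{u_i}\mapsto\tilde x_{u_iu_j}\bmod W$ as $u_j$ ranges over the basis. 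Choosing a nonzero $\hat v\in K^{N+1}$ over $v$, the hypothesis that $X$ is strange for $v$ means $\hat v\in W$, so $\hat v=a_0\tilde x+\sum_{i=1}^n a_i\tilde x_{u_i}$ for some $a_0,\dots,a_n\in K$. Applying $\partial/\partial u_j$ to this identity and discarding the terms that visibly lie in $W$ leaves $\sum_{i=1}^n a_i\tilde x_{u_iu_j}\in W$ for every $j$; by the description of $d_x\gamma$ above and the symmetry of the second derivatives, this says precisely that $\xi:=\sum_{i=1}^n a_i\,\partial/\partial u_i\in T_xX$ lies in $\ker d_x\gamma$. Now $\xi\neq 0$, since otherwise $\hat v=a_0\tilde x$, i.e.\ $v=x$; and the direction $\xi$, read off through $T_xX\hookrightarrow T_x\PN\cong K^{N+1}/\hat x$, $\partial/\partial u_i\mapsto\tilde x_{u_i}$, equals $\sum_i a_i\tilde x_{u_i}\equiv\hat v\pmod{\hat x}$, so $\LL_x(\lin{\xi})$ is the line $\overline{xv}$. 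Hence $v\in\overline{xv}=\LL_x(\lin{\xi})\subseteq\LLx$, which is the first assertion.

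For the inseparability statement I would argue the contrapositive: assuming $\gamma$ separable, I show $X$ is a cone with vertex $v$. Let $x\in X$ be general and let $F$ be an irreducible component of the closure of the fiber of $\gamma$ through $x$. By \autoref{thm:mainthm-1}, $F$ lies in the constant $(n-\rk\gamma)$-plane $\kappa(F)$, which for general $x\in F$ equals $\LLx$, and by the first assertion $v\in\LLx$; thus $\kappa(F)$ contains $x$ and $v$. Separability gives $\rk\gamma=\dim(\im\gamma)$, so every component of a general fiber has dimension $n-\rk\gamma=\dim\kappa(F)$; as $F$ is irreducible and $\kappa(F)$ is an irreducible linear space of that dimension, $F=\kappa(F)$. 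Then $F$ is a linear subvariety of $X$ passing through $x$ and $v$, so $\overline{xv}\subseteq F\subseteq X$. Letting $x$ vary over a dense open of $X$ and passing to closures, the cone over $X$ with vertex $v$ is contained in $X$; that is, $X$ is a cone with vertex $v$.

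The crux is the first assertion, and within it the bookkeeping of the canonical identifications: one must correctly translate the differentiated strangeness relation into membership in $\ker d_x\gamma$ (via $T_W\Gn\cong\Hom(W,K^{N+1}/W)$) and into the line $\overline{xv}$ (via $T_x\PN\cong K^{N+1}/\hat x$). Once that is pinned down, the inseparability statement follows formally from \autoref{thm:mainthm-1} together with the dimension count, so I expect the second part to be routine.
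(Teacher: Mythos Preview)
Your proof is correct, and both halves take a genuinely different route from the paper's.

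For the first assertion, the paper argues sheaf-theoretically: since $\gamma(X)\subset\Gr_v$, the map $\Phi$ of \autoref{sec:shrinking-map-with} factors through the quotient $\sQ|_{\Gr_v}\spcheck\twoheadrightarrow\sQ_v\spcheck$, so the rank-one kernel $E$ of that quotient---which projectivizes to $v$---is contained in $\ker\Phi$, hence $v\in\sigma_\gamma(x)=\kappa(x)=\LLx$. Your argument instead differentiates the relation $\hat v\in W$ at the generic point and reads off directly that the tangent vector pointing toward $v$ lies in $\ker d_x\gamma$. The paper's approach is cleaner once the machinery of \autoref{sec:shrinking-map-with} is set up and generalizes immediately to $V$ a linear subspace (\autoref{thm:rem-strange-loc}); your coordinate computation is more elementary and self-contained, needing nothing beyond the second fundamental form description of $d\gamma$ and the symmetry of mixed partials.

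For the second assertion, the paper invokes the external result \cite[Theorem~1.1]{expshr} (separable $\gamma$ has linear fibers) and then observes $F=\LLx\ni v$. You instead stay inside this paper: \autoref{thm:mainthm-1} gives $F\subset\kappa(F)=\LLx$, separability makes $\dim F=n-\rk\gamma=\dim\kappa(F)$, and irreducibility forces $F=\kappa(F)\ni v$. This is a nice self-contained alternative that avoids the outside citation; in effect you are recovering the linearity of separable Gauss fibers as a corollary of \autoref{thm:mainthm-1}. One small point worth making explicit: your claim $\xi\neq0$ tacitly uses $\dim X\geq1$ (otherwise $v=x$ is possible), but the zero-dimensional case is trivial.
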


The paper is organized as follows.
In \autoref{sec:restr-gauss-maps-1},
we first show that the image $d_x\gamma(T_xX)$ in $T_{\gamma(x)}\Gn$ is constant 
for $x$ in an irreducible component $F$ of a general fiber of $\gamma$.
In \autoref{sec:shrinking-map-with},
by using the techniques on the shrinking map with respect to the Gauss map,
which is studied in \cite[\textsection{}5]{FI2},
we investigate a sheaf description for $\kappa$.
Then we have that $\kappa(x)$ is constant on $x \in F$, and
prove \autoref{thm:mainthm-1}.
In \autoref{sec:gauss-maps-strange-1}, we investigate the case where $X$ is strange for $v$.
This means that $\gamma(X) \subset \Gr_v$,
where $\Gr_v \simeq \Gr(n-1, \PP^{N-1})$ is the set of $n$-planes containing $v$.
Using the sheaf description for $\kappa$ again, we prove \autoref{thm:mainthm-strange}.
This theorem
gives some typical examples in \autoref{sec:examples}, which are relevant to
\autoref{thm:mainthm-1}.

\section{Irreducible component of a general fiber of the Gauss map}

We will show the following two propositions and then prove \autoref{thm:mainthm-1}
in the end of this section.
Let $X \subset \PN$ be a projective variety, and let $\gamma: X \dashrightarrow \Gn$ be the Gauss map of $X$.

\begin{prop}\label{thm:const-dgammaTxX-xinF}
  Let $\xo \in X$ be a general point, and let $\yo := \gamma(\xo)$.
  Let $F$ be an irreducible component of $\overline{\gamma^{-1}(\yo)}$
  such that $\xo \in F$.
  Then we have $d_{\xo}\gamma(T_{\xo}X) = d_x\gamma(T_xX)$
  in $T_{\yo}\Gn$ for general $x \in F$.
\end{prop}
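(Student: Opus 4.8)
The plan is to exploit the fact that a general fiber $F = F_{y_o}$ varies in an algebraic family as $y_o$ moves over the image $Z := \overline{\im\gamma}$, and to compare the Gauss map of $X$ with the projection $\pi \colon X \dashrightarrow Z$ induced by $\gamma$. First I would set up the incidence correspondence $\mathcal{F} \subset X \times Z$ whose fiber over a general $y \in Z$ is the closure of $\gamma^{-1}(y)$, pick the irreducible component $\mathcal{F}_0$ dominating $Z$ that contains $(\xo, \yo)$, and let $p \colon \mathcal{F}_0 \to X$, $q \colon \mathcal{F}_0 \to Z$ be the two projections; then $F$ is (a component of) $p(q^{-1}(\yo))$ and $\dim \mathcal{F}_0 = \dim F + \dim Z$. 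The point is that along $\mathcal{F}_0$ the differential $dq$ has rank $\dim Z$ generically (since $q$ is dominant and, in characteristic considerations, we may replace $Z$ by a variety over which $q$ is generically smooth — or argue on the level of the generic point of $\mathcal{F}_0$ directly without smoothness, using that $\gamma$ factors as $X \dashrightarrow Z \hookrightarrow \Gn$).

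Next, the key algebraic input: $\gamma = \iota \circ \pi$ where $\pi \colon X \dashrightarrow Z$ is dominant and $\iota \colon Z \hookrightarrow \Gn$ is the inclusion of the (reduced, possibly non-normal) image. Hence for general $x \in X$,
\begin{equation*}
  d_x\gamma(T_xX) = d_{\pi(x)}\iota\bigl(d_x\pi(T_xX)\bigr) \subseteq d_{\pi(x)}\iota(T_{\pi(x)}Z),
\end{equation*}
and since $\pi$ is dominant with $x$ general, $d_x\pi(T_xX)$ spans $T_{\pi(x)}Z$ — more precisely $d_x\pi(T_xX) = T_{\pi(x)}Z$ on a dense open set of $X$ where $Z$ is smooth and $\pi$ generically smooth (if $\pi$ were inseparable one still gets the image equals a fixed subspace depending only on $\pi(x)$, by the analogous argument applied to the separable part, so I would phrase the conclusion as: $d_x\pi(T_xX)$ is a subspace of $T_{\pi(x)}Z$ depending only on $\pi(x)$, not on $x$ in its fiber). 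Therefore $d_x\gamma(T_xX) = d_{\pi(x)}\iota\bigl(d_x\pi(T_xX)\bigr)$ depends, for general $x$, only on $\pi(x) = \gamma(x)$. Applying this with $x$ and $\xo$ both general points of $F$ — which are general in $X$ since $\xo$ is general and $F$ is a component of a general fiber — and noting $\gamma(x) = \gamma(\xo) = \yo$, we get $d_x\gamma(T_xX) = d_{\xo}\gamma(T_{\xo}X)$ in $T_{\yo}\Gn$.

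The main obstacle I anticipate is the interplay with inseparability of $\pi$ (equivalently of $\gamma$): the clean statement ``$d_x\pi(T_xX) = T_{\pi(x)}Z$'' fails when $\pi$ is not separable, so one cannot simply invoke generic smoothness of $\pi$. The way around it is to note that what we actually need is only that the \emph{image} $d_x\pi(T_xX)$ is a fixed subspace of $T_{\pi(x)}Z$ as $x$ ranges over $\pi^{-1}(\pi(x))$; this is a consequence of flatness/generic-constancy of ranks of the coherent sheaf map $\pi^*\Omega^1_Z \to \Omega^1_X$ over the open locus where $X$, $Z$ are smooth and $\pi$ is flat. A second, more minor, subtlety is ensuring a general point of the general fiber $F$ is general in $X$: this is standard — the component $F$ moves in the family $q^{-1}(y)$ dominating $X$ via $p|_{\mathcal F_0}$, so a general point of a general $F$ is a general point of $X$, and a general point of $F$ is also a general point of $F$ for the fixed general $\yo$. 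With those two points handled, the argument is essentially the factorization computation above, and I would keep the exposition short, citing the separable-Gauss-map reflexivity results only for orientation and not in the proof itself.
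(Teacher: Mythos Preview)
Your argument hinges on the assertion that, for a dominant rational map $\pi\colon X\dashrightarrow Z$ (here $\pi=\gamma$, $Z=\overline{\gamma(X)}$), the image $d_x\pi(T_xX)\subset T_{\pi(x)}Z$ depends only on $\pi(x)$ and not on the particular $x$ in the fibre. This is exactly the inseparable subtlety you flag, and neither of your proposed fixes resolves it. Rank constancy of $\pi^*\Omega^1_Z\to\Omega^1_X$ only gives that $\dim d_x\pi(T_xX)$ is constant, not that the subspace itself is; and there is no factorisation $X\to Z'\to Z$ with $X\to Z'$ separable and $Z'\to Z$ a universal homeomorphism when $\dim X>\dim Z$, so the ``separable part'' reduction does not go through. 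In fact the assertion is false for abstract $\pi$: in characteristic $p$, take $\pi\colon\A^4\to\A^2$, $(a,b,c,d)\mapsto(a^p+d,\;b^p+c^pd)$. The Jacobian has rows $(0,0,0,1)$ and $(0,0,0,c^p)$, so $\rk d\pi=1$ and $\im d_x\pi=\langle\partial_u+c^p\partial_v\rangle$. The fibre over $(u_0,v_0)$ is irreducible (isomorphic to $\A^2$ via $(a,c)$), yet $c$ varies freely along it, so $\im d_x\pi$ sweeps out a pencil of lines in $T_{(u_0,v_0)}\A^2$. Thus the phenomenon you need does \emph{not} follow from the bare factorisation $\gamma=\iota\circ\pi$; it genuinely uses that $\gamma$ is a Gauss map.

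The paper's proof supplies the missing geometric input. Rather than working with the full image $Z$, one slices $X$ by a general linear space $A$ of the right dimension, transverse to $\ker d_\alpha\gamma$ at a general $\alpha$, obtaining an $r$-dimensional $X''\subset X$ with $\gamma|_{X''}$ separable and generically finite onto $Y':=\gamma(X'')$, so $\dim Y'=r$. Letting $X'$ be the component of $\overline{\gamma^{-1}(Y')}$ through $X''$, separability propagates to $\gamma|_{X'}\colon X'\dashrightarrow Y'$. One then checks that the component $F$ through a general $x_o$ lies entirely in $X'$. Now separability of $\gamma|_{X'}$ forces $d_x\gamma(T_xX')=T_{y_o}Y'$ for general $x\in F\subset X'$; since $T_xX'\subset T_xX$ and both images are $r$-dimensional, $d_x\gamma(T_xX)=T_{y_o}Y'$ as well. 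The point is that $T_{y_o}Y'$ is manifestly independent of $x$, so constancy along $F$ is immediate. Your approach, by contrast, never produces such an auxiliary $Y'$, and without it the constancy simply need not hold.
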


\begin{prop}\label{thm:kappa-const-xinF}
  Let $F$ be as in \autoref{thm:const-dgammaTxX-xinF}.
  Then $\kappa(x)$ is constant on general $x \in F$.
\end{prop}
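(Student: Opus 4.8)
The plan is to deduce the constancy of $\kappa(x) = \LLx$ along $F$ from the already-established constancy of the image $d_x\gamma(T_xX) \subset T_{\yo}\Gn$ (Proposition \ref{thm:const-dgammaTxX-xinF}), by controlling the kernel $\ker d_x\gamma$ from the image. The first step is to recall that, by the very definition of the Gauss map as the differential of the tangency incidence, $\ker d_x\gamma \subset T_xX$ is precisely the subspace of tangent directions along which the embedded tangent space $\TT_xX$ is stationary to first order; concretely, identifying $T_{\yo}\Gn$ with $\Hom(\hat{T},\, V/\hat{T})$ where $\hat T \subset V = H^0(\PN,\sO(1))\spcheck$ is the $(n{+}1)$-dimensional subspace cutting out $\TT_xX = \TT_{\yo}$, the map $d_x\gamma$ sends a tangent vector to the corresponding first-order variation of $\hat T$ inside $V$. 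Since $F \subset \overline{\gamma^{-1}(\yo)}$, the tangent space $\TT_{\yo} = \TT_xX$ is literally the same $n$-plane for all (general) $x \in F$, so this identification of $T_{\yo}\Gn$ does not vary with $x \in F$, and Proposition \ref{thm:const-dgammaTxX-xinF} says the subspace $\im(d_x\gamma) \subset \Hom(\hat T, V/\hat T)$ is one fixed subspace $W$ independent of $x \in F$.

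Next I would combine rank-nullity with the openness of maximal rank. On a general $x \in F$, the rank of $d_x\gamma$ equals $\rk\gamma$ (this is the generic rank, and $F$ consists of general points of $X$), so $\dim \ker d_x\gamma = n - \rk\gamma$ is constant on $F$. Thus $x \mapsto \LL_x(\ker d_x\gamma)$ genuinely defines $\kappa$ on a dense open subset of $F$, and the point is to show the family of these $(n-\rk\gamma)$-planes is constant. Here I would invoke the sheaf description of $\kappa$ developed in Section \ref{sec:shrinking-map-with}: over a suitable open subset of $X$ (hence of $F$), $\kappa$ is given by a subbundle of the restricted tautological bundle, and this subbundle is the kernel of the bundle map that realizes $d\gamma$ fiberwise. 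Restricting that bundle map to $F$, its image is, by Proposition \ref{thm:const-dgammaTxX-xinF}, the constant subsheaf $W \otimes \sO_F$ inside the (constant, since $\TT_xX$ is fixed on $F$) bundle $\Hom(\hat T, V/\hat T) \otimes \sO_F$; consequently its kernel is a subbundle of the constant bundle $T X|_F$ whose fibers all have dimension $n - \rk\gamma$.

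The main obstacle — and the step that needs genuine care rather than formal manipulation — is passing from "the image of the bundle map $d\gamma|_F$ is a constant subsheaf" to "the kernel is a constant subbundle". A map of vector bundles with constant image need not have locally free kernel in general, but here the kernel has constant fiber dimension $n-\rk\gamma$ on the open set where the rank is maximal, so it is a subbundle there; the remaining issue is to see its fibers are a fixed subspace of the ambient constant vector space rather than merely a rank-$(n-\rk\gamma)$ family. For this I would argue that on $F$ the differential $d\gamma|_F$ of $\gamma$ restricted to $F$ is the zero map (as $\gamma$ is constant on $F$), so that the kernel subbundle $\ker d\gamma|_F \supset TF$; since $\dim F \geq n - \rk\gamma$ by upper semicontinuity of fiber dimension and the two ranks already match generically, one in fact gets $\ker d_x\gamma = T_xF$ on general $x\in F$ up to the ambient identification — wait, more carefully: $\ker d_x\gamma$ and $T_xF$ both sit in $T_xX$, with $T_xF \subset \ker d_x\gamma$ and $\dim T_xF \le n-\rk\gamma = \dim\ker d_x\gamma$ is not automatic, so instead I would use the constancy of the image together with the fact that $d\gamma$ factors through the normal-type data along $F$ to conclude the kernel is the $\sO_F$-span of a fixed subspace. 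Concretely: the bundle map $d\gamma|_F \colon TX|_F \to \Hom(\hat T, V/\hat T)_F = \underline{W'}$ is (by Proposition \ref{thm:const-dgammaTxX-xinF}) a map of \emph{trivial} bundles with \emph{constant} image $\underline{W}$, hence after choosing a splitting it is a matrix of regular functions on $F$ whose column span is constant; its kernel is then the constant subspace $\{\xi : \xi \perp W^\vee\}$, independent of $x$. This identifies $\LLx$ on general $x \in F$ with one fixed $(n-\rk\gamma)$-plane, which is $\kappa(F)$, completing the proof.
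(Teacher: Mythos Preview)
Your final step contains a genuine gap. You claim that the bundle map $d\gamma|_F : TX|_F \to \Hom(\hat T, V/\hat T)\otimes\sO_F$ is a map of \emph{trivial} bundles with constant image $W$, and that therefore its kernel is the constant subspace ``$\{\xi : \xi \perp W^\vee\}$''. Both assertions fail. First, $TX|_F$ has no canonical trivialization compatible with the embedding in $\PN$: although $\TT_xX = \TT_{\yo}$ is fixed, the Zariski tangent space $T_xX$ is naturally $\hat T/\hat x$, which moves with $x$. Second, and more seriously, a family of linear surjections $T_xX \twoheadrightarrow W$ onto a fixed target does \emph{not} have constant kernel; the kernel depends on the map, not merely on its image. (A one-parameter example: $(a,b)\mapsto a+tb$ from $K^2$ onto $K$ has image $K$ for all $t$ but kernel varying with $t$.) Your formula $\{\xi : \xi \perp W^\vee\}$ is not the kernel of any specific map.

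The paper circumvents exactly this issue by passing to the adjoint picture via the shrinking map. Rather than studying $\ker d_x\gamma \subset T_xX$, it studies $\ker\Phi_x \subset Q^\vee$, where $Q^\vee = (\sQ\otimes k(\yo))^\vee$ is the \emph{fixed} vector space dual to the affine cone over $\TT_{\yo}$, and $\Phi_x : Q^\vee \to \Hom(T_xX, S^\vee)$ is the map whose kernel projectivizes to $\sigma_\gamma(x)=\kappa(x)$. The point is that $\Phi_x$ factors as
\[
Q^\vee \xrightarrow{\ \phi\ } \Hom(L, S^\vee) \xrightarrow{\ -\,\circ\, d_x\gamma\ } \Hom(T_xX, S^\vee),
\]
with $L = d_x\gamma(T_xX)$ constant by Proposition~\ref{thm:const-dgammaTxX-xinF}. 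Here the first arrow $\phi$ depends only on $\yo$ and $L$, hence is independent of $x\in F$, while the second arrow is \emph{injective} (since $d_x\gamma$ surjects onto $L$). Thus $\ker\Phi_x = \ker\phi$ for all general $x\in F$, and $\kappa(x)$ is constant. The moral: fix the source, not the target, and arrange that the $x$-dependence enters only through a post-composed injection.
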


\subsection{Restriction of the Gauss map}
\label{sec:restr-gauss-maps-1}

Let $r:= \rk \gamma$, and let
$\alpha \in X$ be a general point such that $\rk d_\alpha \gamma = r$.
Take an $(N-n+r)$-plane $A \subset \PN$ containing $\alpha$,
such that $T_{\alpha} A \cap \ker d_{\alpha} \gamma = 0$ in $T_{\alpha}\PN$,
and that $T_{\alpha} A \cap T_{\alpha}X$ is of dimension $r$.
Then $X \cap A$ is smooth at $\alpha$.

Let $X''$ be the irreducible component of $X \cap A$ containing $\alpha$.
Then $X''$ is smooth at $\alpha$ and $\dim (X'') = r$.

\begin{lem}\label{thm:gammaX''A}
  Let $Y' := \gamma(X'') \subset \Gn$.
  Then $\gamma|_{X''}: X'' \dashrightarrow Y'$ is separable and generically finite; in particular, $\dim Y' = r$.
\end{lem}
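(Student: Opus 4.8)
We have $X \subset \PN$ of dimension $n$, Gauss map $\gamma$, rank $r = \rk\gamma$. We picked a general point $\alpha$, a transverse $(N-n+r)$-plane $A$ through $\alpha$ that meets $\ker d_\alpha\gamma$ trivially and meets $T_\alpha X$ in an $r$-dimensional space. Then $X'' \subset X \cap A$ is the component through $\alpha$, smooth there, of dimension $r$. We want to show $\gamma|_{X''}: X'' \dashrightarrow Y'$ is separable and generically finite, so $\dim Y' = r$.

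**Plan.** The key is to compute the differential of $\gamma|_{X''}$ at $\alpha$. We have $d_\alpha(\gamma|_{X''}) = d_\alpha\gamma|_{T_\alpha X''}$, the restriction of $d_\alpha\gamma$ to $T_\alpha X'' \subseteq T_\alpha X$. Now $T_\alpha X'' = T_\alpha A \cap T_\alpha X$ (since $X''$ is a component of $X \cap A$, smooth at $\alpha$, and the intersection is transverse enough that the tangent space is the intersection of tangent spaces — this follows from $\dim(T_\alpha A \cap T_\alpha X) = r = \dim X''$ together with smoothness). By hypothesis $T_\alpha A \cap \ker d_\alpha\gamma = 0$ in $T_\alpha\PN$, hence a fortiori $T_\alpha X'' \cap \ker d_\alpha\gamma = 0$ inside $T_\alpha X$. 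Therefore $d_\alpha\gamma$ is injective on $T_\alpha X''$, a space of dimension $r$. So $d_\alpha(\gamma|_{X''})$ has rank $r = \dim X''$ at a point where $X''$ is smooth.

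**Carrying it out.** First I establish $T_\alpha X'' = T_\alpha A \cap T_\alpha X$: the containment $\subseteq$ is clear since $X'' \subseteq A$ and $X'' \subseteq X$; equality follows because the left side has dimension $r$ (smoothness of $X''$ at $\alpha$, $\dim X'' = r$) and the right side also has dimension $r$ by the choice of $A$. Next, from $T_\alpha A \cap \ker d_\alpha\gamma = 0$ in $T_\alpha\PN$, intersecting with $T_\alpha X$ gives $T_\alpha X'' \cap \ker d_\alpha\gamma = 0$ in $T_\alpha X$, so $d_\alpha\gamma|_{T_\alpha X''}$ is injective, i.e. $d_\alpha(\gamma|_{X''})$ has rank $r$. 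Since $\dim X'' = r$, the rank of the differential of $\gamma|_{X''}$ at the general point $\alpha$ equals $\dim X''$; this forces $\gamma|_{X''}$ to be dominant onto $Y' = \overline{\gamma(X'')}$ with $\dim Y' = r$, generically finite (the differential is injective at a general point, so generic fibers are $0$-dimensional), and separable (the differential at a general point is surjective onto $T_{\gamma(\alpha)}Y'$, which is $r$-dimensional, so the rank of $\gamma|_{X''}$ equals $\dim Y'$).

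**Main obstacle.** The only genuinely delicate point is justifying that the scheme-theoretic tangent space to the component $X''$ of $X \cap A$ at $\alpha$ is exactly $T_\alpha A \cap T_\alpha X$, rather than something larger coming from other components or from non-reducedness of the intersection. This is handled by the dimension count: $X''$ is smooth at $\alpha$ of dimension $r$, and $T_\alpha A \cap T_\alpha X$ has dimension exactly $r$ by the transversality built into the choice of $A$; since $T_\alpha X'' \subseteq T_\alpha A \cap T_\alpha X$ and both are $r$-dimensional, they coincide, and in particular $\alpha$ lies on only the component $X''$ of $X \cap A$ locally (no embedded or extra components inflate the tangent space). Everything else is a direct rank computation, and genericity of $\alpha$ propagates rank-$r$-ness of $d(\gamma|_{X''})$ to the general point of $X''$.
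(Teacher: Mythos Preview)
Your proof is correct and follows exactly the paper's approach: the paper's proof is the single line ``It follows from $T_{\alpha} X'' \cap \ker(d_\alpha\gamma) = 0$,'' and you have simply unpacked this in full detail, establishing that equality via $T_\alpha X'' = T_\alpha A \cap T_\alpha X$ and the transversality hypothesis on $A$, and then drawing the standard consequences (injectivity of the differential at $\alpha$, hence separability and generic finiteness).
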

\begin{proof}
  It follows from $T_{\alpha} X'' \cap \ker(d_\alpha\gamma) = 0$.
\end{proof}

\begin{lem}\label{thm:gammaX'A}
  Let $X' \subset X$ be an irreducible component of
  $\overline{\gamma^{-1}Y'}$ containing $X''$.
  Then $\gamma|_{X'}: X' \dashrightarrow Y'$ is separable.
\end{lem}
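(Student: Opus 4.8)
The goal is to upgrade the separability of $\gamma|_{X''}$ established in \autoref{thm:gammaX''A} to separability of $\gamma|_{X'}$, where $X'$ is an irreducible component of $\overline{\gamma^{-1}Y'}$ containing $X''$. The plan is to compare the generic smooth fibers of the two restricted maps and to argue that, since both have the same image $Y'$, a general fiber of $\gamma|_{X'}$ contains $X''$ as a \emph{component} through a suitably general point, which already forces the separability statement to propagate.

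\medskip

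First I would record the basic dimension count: by \autoref{thm:gammaX''A} we have $\dim Y' = r$, while $X'$ is by definition a component of $\overline{\gamma^{-1}Y'}$, so a general fiber of $\gamma|_{X'}$ over $Y'$ has dimension $\dim X' - \dim\gamma(X')$; since $\gamma(X') \subseteq Y'$ and $Y' = \gamma(X'') \subseteq \gamma(X')$, in fact $\gamma(X') = Y'$. Next, pick a general point $\beta \in X'$ lying in $X''$ (possible because $X''$ is an irreducible subvariety of $X'$ through $\alpha$, and $\alpha$ was chosen general) with $\gamma$ defined at $\beta$ and $\rk d_\beta\gamma = r$. The key point is to show $\ker d_\beta(\gamma|_{X'}) = \ker d_\beta\gamma \cap T_\beta X'$ has dimension exactly $\dim X' - r$, i.e.\ that $d_\beta\gamma$ restricted to $T_\beta X'$ still has rank $r$. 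This follows because $T_\beta X'' \subseteq T_\beta X'$ and $d_\beta\gamma|_{T_\beta X''}$ is injective (the transversality condition $T_\alpha X''\cap \ker d_\alpha\gamma = 0$ is an open condition, hence holds at the general point $\beta$ of $X''$ too), so the rank of $d_\beta\gamma|_{T_\beta X'}$ is at least $r = \dim X''$, and it is at most $r = \rk\gamma$. Therefore $d_\beta(\gamma|_{X'})$ has maximal possible rank $\dim Y' = r$ at a general point $\beta$, which is exactly the statement that $\gamma|_{X'}: X' \dashrightarrow Y'$ is separable.

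\medskip

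The main obstacle I anticipate is the choice of the point $\beta$: one needs it simultaneously general in $X'$ (so that the generic rank of $d\cdot(\gamma|_{X'})$ is attained there and $\gamma$ is a morphism near it) and lying on $X''$ (so that the transversality computed at $\alpha$ transfers). This is legitimate because $X''$ is an irreducible subvariety of $X'$ of dimension $r \geq 1$ containing the general point $\alpha$ of $X$, so the locus in $X''$ where $\gamma$ is a morphism, where $\rk d\gamma = r$, and where the generic-rank locus of $\gamma|_{X'}$ is attained, is a nonempty open subset of $X''$ (each of these is the trace on $X''$ of a dense open of $X$ or of $X'$, and $X''$ is not contained in the respective complements since $\alpha$ avoids them). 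One must be slightly careful that "general point of $X'$" in the sense needed for the rank of the differential of a rational map is an open condition on $X'$ and that $X''$ meets that open set — but this is exactly guaranteed by the genericity built into the construction of $A$ in \autoref{sec:restr-gauss-maps-1}. Once $\beta$ is fixed, the rank inequality $r \le \rk d_\beta(\gamma|_{X'}) \le \dim Y' = r$ is immediate, and separability follows from the characterization $\rk\gamma|_{X'} = \dim\gamma(X')$ recalled in the introduction.
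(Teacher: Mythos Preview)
Your argument is correct and follows essentially the same idea as the paper's: both pick a general point of $X''$ and use the factorization $X'' \subset X' \to Y'$ together with the separability of $\gamma|_{X''}$ to force the differential of $\gamma|_{X'}$ to have rank $r$ there, hence generically. The only difference is packaging---the paper phrases this dually via cotangent sheaves (injectivity of $\Omega^1_{Y',\gamma(x)} \to \Omega^1_{X',x}$, which then spreads to an open neighborhood in $X'$), which lets it bypass the genericity discussion you carry out explicitly.
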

\begin{proof}
  Let $x$ be a general point of $X''$.
  Since $\gamma|_{X''}$ is separable,
  the composite homomorphism
  $\Omega_{Y', \gamma(x)}^1 \rightarrow \Omega_{X', x}^1 \rightarrow \Omega_{X'', x}^1$
  is injective;
  hence so is the homomorphism
  $\Omega_{Y', \gamma(x)}^1 \rightarrow \Omega_{X', x}^1$.
  Thus $\gamma^*\Omega_{Y'}^1 \rightarrow \Omega_{X'}^1$ is injective on an open neighborhood of $x$ in $X'$,
  i.e., $\gamma|_{X'}$ is separable.
\end{proof}

\begin{rem} Since $\alpha \in X$ is general, we can assume that
  $\alpha$ is a smooth point of $X$
  and that $\gamma(\alpha)$ is a smooth point of $Y$.
  Then
  $X' \cap X^{sm} \cap \gamma^{-1}(Y^{sm}) \neq \emptyset$,
  where we denote by $X^{sm}$ the smooth locus of $X$, and so on.
\end{rem}

Now, let us consider general fibers of $\gamma$.

\begin{lem}\label{thm:F-in-X'}\label{thm:F-in-X'-rem}
  Let $\Xdo \subset X'$ to be the non-empty open subset
  \[
  \set*{x \in {X'}^{sm}}{\rk d_x (\gamma|_{X'}) = r} \cap X^{sm} \cap \gamma^{-1}(Y^{sm}) \cap \gamma^{-1}({Y'}^{sm})\setminus \bigcup_{i=1}^{s} V_i,
  \]
  where
  $V_1, \dots, V_s$ are the irreducible components of
  $\overline{\gamma^{-1}Y'}$ not equal to $X'$.
  Let $x \in \Xdo$
  and let $F$ be an irreducible component of $\overline{\gamma^{-1}(\gamma(x))} \subset X$ containing $x$.
  Then $F$ must be contained in $X'$.
\end{lem}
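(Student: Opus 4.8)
The plan is to argue that the irreducible component $F$ of the general Gauss fiber through $x$ cannot escape $X'$. Recall that $X'$ was constructed as a component of $\overline{\gamma^{-1}Y'}$ containing $X''$, where $Y' = \gamma(X'')$ has dimension $r = \rk\gamma$, and that $\gamma|_{X'}: X' \dashrightarrow Y'$ is separable by \autoref{thm:gammaX'A}. The key observation is that $F$, being (the closure of) a general fiber of $\gamma$ over a point $\yo = \gamma(x)$, maps to a single point of $\Gn$; in particular $\gamma(F) = \{\yo\}$ lies in $Y'$ as soon as $\yo \in Y'$, which holds since $x \in \Xdo \subset \gamma^{-1}({Y'}^{sm})$. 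Hence $F \subset \overline{\gamma^{-1}Y'}$, so $F$ is contained in the union $X' \cup V_1 \cup \dots \cup V_s$ of the irreducible components of $\overline{\gamma^{-1}Y'}$.

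Now I would exploit that $F$ is irreducible together with the fact that $x \in \Xdo$ avoids all of $V_1, \dots, V_s$. Since $x \in F$ and $x \notin \bigcup_i V_i$, the point $x$ lies in exactly those components of $\overline{\gamma^{-1}Y'}$ that are not among the $V_i$, i.e., $x \in X'$ and $x \notin V_i$ for every $i$. Because $F$ is irreducible and $F \subset X' \cup V_1 \cup \dots \cup V_s$, the component $F$ must be contained in one of these irreducible pieces (a subvariety that is irreducible and covered by finitely many closed subsets lies in one of them, once we pass to the union of those containing a fixed generic point — more precisely, $F = \bigcup (F \cap X') \cup \bigcup_i (F \cap V_i)$ is a decomposition of the irreducible set $F$ into closed subsets, so $F$ equals one of them). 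Since $x \in F$ but $x \notin V_i$, the piece $F \cap V_i$ is a proper closed subset of $F$ for each $i$; therefore $F = F \cap X'$, that is, $F \subset X'$, as desired.

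The only point requiring a little care — and hence the main (mild) obstacle — is ensuring that $\gamma$ is actually \emph{defined} along a dense subset of $F$ and that the generic fiber statement "$\gamma(F)$ is a point" is being applied legitimately. This is handled by the hypothesis $x \in \Xdo$: the open set $\Xdo$ was chosen inside $X^{sm} \cap \gamma^{-1}(Y^{sm}) \cap \gamma^{-1}({Y'}^{sm})$ and inside the locus where $\rk d_x(\gamma|_{X'}) = r$, so $\gamma$ is a morphism near $x$, $x$ is a smooth point of $X$, $X'$, $Y$, and $Y'$, and $F$ is a genuine component of the fiber $\overline{\gamma^{-1}(\gamma(x))}$ through such an $x$. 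With these hypotheses in force the containment $\gamma(F) \subset Y'$ and the irreducibility argument above go through without further subtlety, completing the proof.
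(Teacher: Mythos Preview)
Your argument is correct and follows essentially the same route as the paper: show $F \subset \overline{\gamma^{-1}Y'} = X' \cup V_1 \cup \dots \cup V_s$, then use irreducibility of $F$ together with $x \in F \setminus \bigcup_i V_i$ to force $F \subset X'$. Your extra paragraph on why $\gamma$ is defined near $x$ and why $\gamma(F) \subset Y'$ is a welcome clarification, but the core reasoning matches the paper's proof exactly.
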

\begin{proof}
  Since $F$ is irreducible and contained in $\overline{\gamma^{-1}Y'}$,
  $F$ is contained in $X'$ or $V_i$.
  Since $x$ is not contained in $\bigcup V_i$,
  and since $x \in F$,
  we have $F \subset X'$.
\end{proof}

\begin{rem}\label{thm:unionX'o}
  For general $\alpha \in X$, we can take $X' = X'_{\alpha}$ as above.
  Thus the union of ${X'_{\alpha}}\spcirc$'s becomes a dense subset of $X$.
\end{rem}

\begin{proof}[Proof of \autoref{thm:const-dgammaTxX-xinF}]
  Let $\xo \in X$ be a general point and let $\yo := \gamma(\xo)$.
  We may take some $\alpha \in X$ such that $\xo \in {X'_{\alpha}}\spcirc$
  as in \autoref{thm:unionX'o}, and then
  the irreducible component $F$ of $\overline{\gamma^{-1}(\yo)}$ is contained in $X'$
  due to \autoref{thm:F-in-X'}.
  Then 
  $d_{\xo}\gamma(T_{\xo}X) = d_{\xo}\gamma(T_{\xo}X') = T_{\yo}Y'$.
  Moreover, since $\gamma(F) = \yo$,
  it holds that $d_{x}\gamma(T_{x}X) = d_{x}\gamma(T_{x}X') = T_{\yo}Y'$ for general $x \in F$.
\end{proof}

\subsection{Shrinking map with respect to the Gauss map}
\label{sec:shrinking-map-with}

In the proof of \autoref{thm:kappa-const-xinF},
the shrinking map $\sigma_{\gamma}$ with respect to the Gauss map $\gamma$
will play a key role.
We prepare some notations and recall the definition of $\sigma_{\gamma}$
(for the general case, see \cite[Definition 2.1]{FI2}).

Let $\sQ$ and $\sS$ be
the universal quotient bundle and subbundle of rank $n+1$ and $N-n$ on $\Gn$
with the exact sequence
$0 \rightarrow \sS \rightarrow H^0(\PN, \sO(1))
\otimes \sO_{\Gn}\rightarrow \sQ \rightarrow 0$.
For an open subset $\Xo \subset X^{sm}$,
we have the following composite homomorphism
\[
\Phi : \gamma^*\sQ\spcheck \rightarrow \gamma^*\sHom(\sHom(\sQ\spcheck , \sS\spcheck), \sS\spcheck)
\rightarrow \sHom(T_{\Xo}, \gamma^*\sS\spcheck),
\]
where
the first homomorphism
is induced from the dual of
$\sS \otimes \sS\spcheck \rightarrow \sO$,
and the second one
is induced from the differential
\[
d\gamma : T_{\Xo} \rightarrow \gamma^*T_{\Gn} = \gamma^*\sHom(\sQ\spcheck , \sS\spcheck).
\]
Set $n^{-} := \rk(\ker\Phi) -1$, where $\rk(\ker\Phi)$ is the rank as a torsion free sheaf.
Then $\ker \Phi|_{\Xo}$ is a subbundle of
$H^0(\PN, \sO(1))\spcheck \otimes \sO_{\Xo}$
of rank $n^{-}+1$
(replacing $\Xo \subset X$ by an smaller open subset if necessary).
By the universality of the Grassmann variety, 
we have an induced morphism
\[
\sigma_{\gamma}: \Xo \rightarrow  \Gr(n^{-}, \PN)
\]
and call it the \emph{shrinking map with respect to $\gamma$}.

We note that
the $n^{-}$-plane $\sigma_{\gamma}(x)$ in $\PN$ corresponds to
the projectivization of $\ker \Phi \otimes k(x) \subset H^0(\PN, \sO(1))\spcheck$
for $x \in \Xo$.

\begin{rem}\label{thm:kappa-sigma}
  From \cite[Proposition 5.2]{FI2},
  $n^{-} = n-\rk\gamma$ and
  $\kappa = \sigma_{\gamma}$.
\end{rem}

\begin{proof}[Proof of \autoref{thm:kappa-const-xinF}]
  Let $\xo \in X$, $\yo := \gamma(\xo)$, and $F \subset \overline{\gamma^{-1}(\yo)}$
  be as in \autoref{thm:const-dgammaTxX-xinF}. Then,
  setting $L := d_{\xo}\gamma (T_{\xo}X)$ in $T_{\yo}\Gn$,
  we have $L = d_{x}\gamma (T_{x}X)$ for general $x \in F$.
  Thus the linear map
  $\Phi_x$ is expressed with the following commutative diagram:
  \[
  \xymatrix{    Q \spcheck \ar@/^1pc/@<1ex>[rr]^{\phi} \ar@/_1pc/[drr]_{\Phi_x} \ar[r] & \Hom(T_{\yo}\Gn, S\spcheck) \ar[r]
    &\Hom(L, S\spcheck) \ar[d]^{- \circ d_x\gamma} \\
    &&\Hom(T_{x}X, S\spcheck),
  }  \]
  where we write $Q := \sQ \otimes k(\yo)$ and $S := \sS \otimes k(\yo)$.
  Since the vertical arrow of the above diagram is injective,
  we have $\ker(\Phi_x) = \ker(\phi)$,
  where $\phi: Q \spcheck \rightarrow \Hom(L, S\spcheck)$ in the diagram is independent to the choice of $x \in F$.
  This means that $\sigma_{\gamma}(x)$,
  which corresponds to the projectivization of $\ker(\Phi_x)$,
  is constant on general $x \in F$.
  Since $\kappa(x) = \sigma_{\gamma}(x)$ as mentioned in \autoref{thm:kappa-sigma},
  the assertion follows.
\end{proof}

Now let us conclude the proof of \autoref{thm:mainthm-1}.

\begin{rem}\label{thm:Z-to-Y}
  Let $Z \subset X$ be a closed subset with $Z \neq X$, i.e., $\dim(Z) < n$.
  Then each irreducible component $F$ of a general fiber of $\gamma: X \dashrightarrow Y$
  is not contained in $Z$.
  Otherwise,
  the dimension of a general fiber of $Z \dashrightarrow Y$
  is equal to $n-\dim(Y)$, which implies $\dim Z = n$, a contradiction.
\end{rem}

\begin{proof}[Proof of \autoref{thm:mainthm-1}]
  Let $\Xo$ be a dense subset consisting of $x \in X$ satisfying the
  conditions of \autoref{thm:const-dgammaTxX-xinF}.
  Then each irreducible component $F$ of a general fiber of $\gamma$
  satisfies $F \cap \Xo \neq \emptyset$,
  as in \autoref{thm:Z-to-Y} with $Z = \overline{X \setminus \Xo}$.
  We can apply Propositions \ref{thm:const-dgammaTxX-xinF} and \ref{thm:kappa-const-xinF}
  by taking some $\xo \in F \cap \Xo$, and then $\kappa$ contracts $F$ to one point.
\end{proof}

\section{Gauss maps of strange varieties}
\label{sec:gauss-maps-strange}

In this section, we will prove \autoref{thm:mainthm-2} and use it
to construct typical examples relevant to \autoref{thm:mainthm-1}.

\subsection{Strange point and the kernel of the differential of the Gauss map}
\label{sec:gauss-maps-strange-1}

We continue to use the notations in \autoref{sec:shrinking-map-with}.
Let $\pi_v: \PN \setminus \set{v} \rightarrow \PP^{N-1}$ be the linear projection from $v$,
yielding an inclusion
\begin{equation*}  \Gr_v := \Gr(n-1, \PP^{N-1}) \hookrightarrow \Gn
\end{equation*}
which sends an $(n-1)$-plane $L \subset \PP^{N-1}$ to
the $n$-plane $\pi_v^{-1}(L) \cup \set{v} \subset \PN$.
We note that $X$ is strange for $v$
if and only if $\gamma(X) \subset \Gr_v$.

We denote by
$\sQ$ and $\sS$
the universal quotient bundle and subbundle of ranks $n+1$ and $N-n$ on $\Gn$,
and by
$\sQ_v$ and $\sS_v$
the universal quotient bundle and subbundle of ranks $n$ and $N-n$ on $\Gr_v$.

\begin{rem}\label{thm:v-E}
  \begin{inparaenum}
  \item 
    We have
    $\sS|_{\Gr_v} = \sS_v$ and have a natural injection
    $\sQ_v \hookrightarrow \sQ|_{\Gr_v}$
    with the following commutative diagram:
    \[
    \xymatrix{      0 \ar[r]& \sS_v \ar@{=}[d] \ar[r]& H^0(\PP^{N-1}, \sO(1)) \otimes \sO_{\Gr_v} \ar@{^(->}[d] \ar[r]
      & \sQ_v \ar@{^(->}[d]\ar[r]& 0
      \\
      0 \ar[r]& \sS|_{\Gr_v} \ar[r]& H^0(\PN, \sO(1)) \otimes \sO_{\Gr_v} \ar[r]& \sQ|_{\Gr_v} \ar[r]& 0
      \makebox[0pt]{\ .}
    }    \]

  \item \label{item:thm:v-E-2}
    Let $E$ be the kernel of the surjection $\sQ|_{\Gr_v}\spcheck \twoheadrightarrow {\sQ_v}\spcheck$ 
    induced from the dual of the above injection. Then $E$ is of rank $1$.
    Let $y = [M] \in \Gn$ be an $n$-plane belonging to $\Gr_v$.
    Then the projectivization of $E \otimes k(y) \subset \sQ|_{\Gr_v}\spcheck \otimes k(y)$
    in $H^0(\PN, \sO(1))$
    corresponds to $v \in M$ in $\PN$.
  \end{inparaenum}
\end{rem}

\begin{proof}[Proof of \autoref{thm:mainthm-2}]
  Since $\gamma(X) \subset \Gr_v$,
  the homomorphism $\Phi$ given in \autoref{sec:shrinking-map-with}
  is described with the following commutative diagram:
  \[
  \xymatrix{    \gamma^*\sQ|_{\Gr_v}\spcheck \ar[r] \ar@/^1pc/@<1ex>[rr]^{\Phi} \ar@{->>}[d]
    & \gamma^*\sHom(\sHom(\sQ|_{\Gr_v}\spcheck , \sS_v\spcheck), \sS_v\spcheck)
    \ar@{->>}[d] \ar[r]
    & \sHom(T_{\Xo}, \gamma^*\sS_v\spcheck) \ar@{=}[d]
    \\
    \gamma^*\sQ_v\spcheck \ar[r]& \gamma^*\sHom(\sHom(\sQ_v\spcheck , \sS_v\spcheck), \sS_v\spcheck)
    \ar[r]& \sHom(T_{\Xo}, \gamma^*\sS_v\spcheck),
  }  \]
  where the middle vertical arrow is induced from
  $T_{\Gr_v} = \sHom(\sQ_v\spcheck , \sS_v\spcheck) \hookrightarrow T_{\Gn}|_{\Gr_v}$.
  From the diagram, we have $\gamma^*E \subset \ker\Phi$, where
  $E := \ker(\sQ|_{\Gr_v}\spcheck \twoheadrightarrow {\sQ_v}\spcheck)$.
  Therefore the point $v \in \PN$, which corresponds to $E$ as in \autoref{thm:v-E}\ref{item:thm:v-E-2},
  is contained in $\sigma_{\gamma}(x) = \kappa(x) = \LLx$.
  Hence the former statement of the theorem follows.

  If $\gamma$ is separable, then $X$ must be a cone with vertex $v$.
  The reason is as follows.
  Let $x \in X$ be general. From \cite[Theorem 1.1]{expshr} (or \cite[Corollary 5.4]{FI2}),
  $F:= \overline{\gamma^{-1}(\gamma(x))} \subset X$ is a linear variety.
  Then $F = \LL_x(\ker d_x\gamma)$
  (this is because, $T_xF \subset \ker d_x\gamma$ and their dimensions coincide).
  Therefore the line $\overline{xv}$ is contained in $F \subset X$, which means that $v$ is a vertex.
\end{proof}

\begin{rem}\label{thm:rem-strange-loc}
  We say that $X$ is \emph{strange} for a linear variety $V \subset \PN$ if
  $V \subset \TT_xX$ for any smooth point $x \in X$,
  equivalently, $X$ is strange for any point $v \in V$.
  If $X$ is strange for $V$, then $\LLx$ contains $V$ for general point $x \in X$.
  This immediately follows from \autoref{thm:mainthm-strange},
  since $v \in \LLx$ for any $v \in V$.
\end{rem}

\subsection{Examples}
\label{sec:examples}

As mentioned in \autoref{sec:introduction},
according to Fukasawa's results, a general fiber of an inseparable Gauss map
can be a non-linear variety. We check \autoref{thm:mainthm-1} in such a case:

\begin{ex}\label{thm:fib-conic-LLx}
  Let $K$ be the ground field with $\mathrm{char}K = 3$,
  and let $X \subset \PP^4$ be the hypersurface defined by 
  $f = Z_1^6+Z_2^6+Z_3Z_4Z_0^4,$
  where $(Z_0: Z_1: \dots: Z_4)$ are the homogeneous coordinates on $\PP^4$.
  Then the following holds.
  \begin{enumerate}
  \item The Gauss map $\gamma$ of $X$ is inseparable,
    and its general fiber is set-theoretically equal to a smooth conic $C \subset X$.
  \item $\LLx$ is constant for general $x$ in such a conic $C$,
    and moreover, is equal to the $2$-plane spanned by $C$.

  \end{enumerate}
  The reason is as follows.
  We can identify $\gamma: X \dashrightarrow \Pv[4]$ with
  the rational map sending $(Z_0: \dots: Z_4)$ to
  \[
  (\diff{f}{Z_i})_{0\leq i\leq4}
  = (Z_3Z_4Z_0^3: 0: 0: Z_4Z_0^4: Z_3Z_0^4).
  \]
  Let $\ell = (Z_0 = Z_3 = Z_4 = 0)$, a line in $\PP^4$.
  Then $X$ is strange for $\ell$.
  The image of $\gamma$ is equal to $\ell^* \subset \Pv[4]$,
  the set of hyperplanes containing $\ell$. Here
  $\ell^*$ is a $2$-plane of $\Pv[4]$.
  On the other hand, the rank of $\gamma$ is equal to $1$.
  For a general point $x \in X$,
  it follows from \autoref{thm:mainthm-strange} and \autoref{thm:rem-strange-loc} that
  \[
  \LLx = \lin{x, \ell},
  \]
  where the right hand side is the $2$-plane in $\PP^4$ spanned by $x$ and $\ell$.

  Now, we fix a general point $\alpha = (1: \alpha_1: \dots: \alpha_4) \in X$ such that
  $\alpha_3, \alpha_4$ are nonzero. Then we have
  $\gamma(\alpha) = (1: 0: 0: 1/\alpha_3: 1/\alpha_4)$ in $\Pv[4]$.
  On the other hand, the $2$-plane
  $\LLx[\alpha] = \lin{\alpha, \ell}$ is defined by
  $\alpha_3Z_0 - Z_3, \alpha_4Z_0 - Z_4$. 
  Let us consider $C_{\alpha} = \lin{\alpha, \ell} \cap X$, whose defining polynomials are
  $Z_1^6+Z_2^6+\alpha_3\alpha_4Z_0^6, \alpha_3Z_0 - Z_3, \alpha_4Z_0 - Z_4$.
  Then $C_{\alpha} \subset \PP^4$ is set-theoretically equal to the smooth conic in $\lin{\alpha, \ell}$,
  \[
  (Z_1^2+Z_2^2+\sqrt[3]{\alpha_3\alpha_4}Z_0^2= \alpha_3Z_0 - Z_3= \alpha_4Z_0 - Z_4 = 0),
  \]
  where $\mathrm{char}K = 3$ and $\sqrt[3]{\alpha_3\alpha_4} \in K$ is the unique third root of ${\alpha_3\alpha_4}$.

  We have $\gamma(C_{\alpha}) = \set{\gamma(\alpha)}$
  since the coordinates of a general point of $C_{\alpha}$ is written by $(1: *: *: \alpha_3: \alpha_4)$.
  If a point $\beta = (1: \beta_1: \dots: \beta_4) \in X$ satisfies
  $\gamma(\alpha) = \gamma(\beta)$, then we have $\beta_3 = \alpha_3, \beta_4 = \alpha_4$,
  and then $C_{\alpha} = C_{\beta}$ holds because of their defining polynomials.
  Hence $\overline{\gamma^{-1}(\gamma(\alpha))}$ coincides with
  the conic $C_{\alpha}$, and the statement of (a) holds.
  For general $x \in C_{\alpha}$, the $2$-plane
  $\lin{x, \ell}$ is spanned by $C_{\alpha}$, which implies the statement of (b).
\end{ex}

In \autoref{thm:mainthm-1},
in the case where a general fiber of the Gauss map is not irreducible,
for irreducible components $F$'s of the fiber,
the constant $({n - \rk\gamma})$-planes $\kappa(F)$'s 
can be different:

\begin{ex}\label{thm:ex-fib-diff-pl}
  Let $\mathrm{char}K = 3$, and let
  $X \subset \PP^3$ be the surface defined by the homogeneous polynomial,
  $f = Z_0^5 + Z_1^5 - Z_2^3 Z_3^2$.
  Then the following holds.
  \begin{enumerate}
  \item
    The Gauss map $\gamma$ of $X$ is inseparable, and its general fiber
    is set-theoretically equal to a set of $4$ points.

  \item\label{eq:ex-LneqL}
    $\LL_x(\ker d_x\gamma) \neq \LL_{x'}(\ker d_{x'}\gamma)$
    for distinct $x,x'$ of the above $4$ points.
  \end{enumerate}
  The reason is as follows.
  The map $\gamma: X \dashrightarrow \Pv[3]$ is given by
  \begin{equation}\label{eq:ex-diff}
    (\diff{f}{Z_i})_{0 \leq i \leq 3} = 
    (- Z_0^4 : - Z_1^4 : 0 : Z_2^3Z_3).
  \end{equation}
  Thus $X$ is strange for $v := (0:0:1:0)$,
  and the rank of $\gamma$ is equal to $1$.
  In addition, $\gamma$ is generically finite; thus,
  each irreducible component of a general fiber of $\gamma$
  is a set of one point.

  Let $x = (1:a:b:c) \in X$ be a general point such that $a\neq 0$.
  Let $x' = (w: a': b': c') \in X$ be a point satisfying $\gamma(x') = \gamma(x)$.
  By \ref{eq:ex-diff}, we have
  \[
  (-w^4 : -{a'}^4: 0 : {b'}^3c') = (-1 : -a^4 : 0 : b^3c).
  \]
  Since $w$ must be nonzero, we can set $w=1$ and express $x' = (1: a': b': c')$.
  Then the above equality implies
  ${a'}^4 = a^4$ and ${b'}^3c' = b^3c$.
  Thus $a'$ is a $4$-th root of $a^4$,
  which is equal to a $4$-th root of $1$ multiplied by $a$.
  We recall that the set of $4$-th roots of $1$ consists of $4$ elements
  $1, -1, \xi, -\xi$,  where $\xi \in K$ be the square root of $-1$.

  Let $\zeta \in K$ be one of the $4$-th roots of $1$, and set $a' = a \zeta$.
  We show that $b', c' \in K$ are uniquely determined, as follows.
  Since $f(x) = f(x') = 0$, we have $1+a^5-b^3c^2 = 1+(a\zeta)^5-{b'}^3{c'}^2 = 0$.
  Thus,
  \[
  a^5\zeta = \zeta(b^3c^2-1) = {b'}^3{c'}^2-1.
  \]
  It follows from $(b')^3c' = b^3c$ that
  $\zeta(b^3c^2-1) ={b'}^3{c'}^2-1 = b^3cc' - 1$, which implies
  \[
  c' = \frac{\zeta(b^3c^2-1) + 1}{b^3c}.
  \]
  Hence we also have 
  \[
  {b'}^3 = -\frac{b^6c^2}{\zeta(b^3c^2-1) + 1}.
  \]
  Thus $b'$ is obtained as the unique third root of
  the right hand side of the above formula.

  We set $x_{\zeta} := (1: a \zeta: b': c') \in X$ for a $4$-th root $\zeta \in \set{1, -1, \xi, -\xi}$ of $1$,
  where $b',c' \in K$ are 
  uniquely determined by $\zeta$
  as above.
  Then,
  \[
  \gamma^{-1}(\gamma(x)) = \set{x, x_{-1}, x_{\xi}, x_{-\xi}}.
  \]
  For the strange point $v := (0:0:1:0)$,
  we have $\overline{xv} \neq \overline{x'v}$ for each $x' \in \gamma^{-1}(\gamma(x))$ with $x' \neq x$.
  On the other hand, \autoref{thm:mainthm-strange} implies that
  $\LL_x(\ker d_x\gamma) = \overline{xv}$ and $\LL_{x'}(\ker d_{x'}\gamma) = \overline{x'v}$.
  Since the two lines are distinct, the assertion of \ref{eq:ex-LneqL} follows.
\end{ex}

\subsection*{Acknowledgments}
The author was partially supported by JSPS KAKENHI Grant Number 25800030.

\vspace{1ex}
\end{document}